\title{
A note on non-simply connected rational homotopy models}
\author{Syunji Moriya\\
%MSC2020: primary: 18M75, 55P43, 55T99, 57R40, secondary: 18N40
}
\thanks{This work is partially supported by JSPS KAKENHI Grant Number JP17K14192.}
\address{Osaka Central Advanced Mathematical Institute, Osaka Metropolitan University, 3-3-138 Sugimoto, Sumiyoshi-ku Osaka 558-8585 Japan}
\email{moriyasy@gmail.com}
\theoremstyle{definition}
\newtheorem{defi}{Definition}[section]
\newtheorem{exa}[defi]{Example}
\newtheorem{rem}[defi]{Remark}
\theoremstyle{plain}
\newtheorem{lem}[defi]{Lemma}
\newtheorem{thm}[defi]{Theorem}
\newcommand{\ZZ}{\mathbb{Z}}
\newcommand{\QQ}{\mathbb{Q}}
\newcommand{\kk}{\mathsf{k}}
\newcommand{\MM}{\mathcal{M}}
\newcommand{\NN}{\mathcal{N}}
\newcommand{\OO}{\mathcal{O}}
\newcommand{\Hom}{\mathrm{Hom}}
\newcommand{\TT}{\mathsf{T}}
\newcommand{\LL}{\mathcal{L}}
\begin{document}
\maketitle

\section{Introduction} G\'omez-Tato--Halperin--Tanr\'e \cite{GHT} proposed a non-simply connected generalization of the  rational homotopy theory of Quillen \cite{Quillen} and Sullivan \cite{Sullivan}. They adopted  local systems of commutative differential graded algebras (CDGA's) as  algebraic models of  spaces and proved that the fiberwise rationalization of a space with arbitrary fundamental group is recovered from the local system. They also established an equivalence between homotopy categories by making use of their minimal models.   As another rational model for a non-simply connected space, a $\pi_1$-equivariant CDGA was introduced by Hain \cite{Hain} under suggestion by Delinge, and studied by Katzarkov-Pantev-To\"en \cite{KPT} and Pridham \cite{Pridham}. This CDGA can be  defined as an algebra of  (polynomial) de Rham forms with coefficients in a large semi-simple local system.  This model is also studied by the present author  \cite{Moriya} in terms of dg-categories with  emphasis on recovering  homotopy invariants.  For example, the rational homotopy groups and Postnikov invariants are algebraically recovered from this equivariant CDGA model under some assumption (see  also \cite{Pridham}). Both of these algebraic models have their own advantages. The local system models  have  clear correspondence with  fibrations over  $K(\pi,1)$-spaces, and the equivariant CDGA models admit a notion of minimal model which is very similar to Sullivan's one. So direct comparison of these models might be important. In this note, we illustrate a part of such  comparison  in a simple example. Concretely speaking, we present an example of local system of CDGA's and compute the corresponding  equivariant CDGA model. Our example  is a non-nilpotent version of an example in \cite{GHT}.  We also  see how some homotopy invariants are recovered from the equivariant model with this example. Especially, we present a relatively small complex which computes the twisted cohomology of the corresponding space. To do so, we make explicit the well-known correspondence between extensions of representations and twisted cohomology classes of degree 1 in terms of dg-categories  for the case of the group $\ZZ\times \ZZ$, based on the general theory developed in \cite{Simpson}.  We also describe a general procedure to obtain the corresponding equivariant CDGA model from a local system model in the case of  $\pi_1=\ZZ\times \ZZ$.

\section{Preliminary}
In this section, we  recall basic notions from other papers. We sometimes omit details, which  can be found   in the references. \\
\indent We suppose  all vector spaces in this note, possibly with some extra structures such as a grading, differential, and product are  defined over a fixed base field $\kk$ of characteristic $0$.  All complexes are non-negatively and  cohomologically graded. The symbol $\otimes=\otimes_{\kk}$ denotes the tensor product over $\kk$. A commutative differential graded algebra (in short, CDGA) is a complex $A$ equipped with a unital, associative and graded commutative multiplication $A\otimes A\to A$ commuting with the differential and satisfying $d(1)=0$. A map of CDGA's is a chain map  which preserves the product and unit.  \\
\indent Formally we will work with the category of simplicial sets and the terms `space' and `fibration' mean simplicial set and Kan fibration, respectively.  Let $K$ be a simplicial set and $C$ a category. A local system  on $K$ in $C$ is a functor from the opposite category of simplexes of $K$ to  $C$, so it consists of a family of objects $F_\sigma$ of $C$ labeled by $\sigma\in \sqcup_{n\geq 0}K_n$ and maps (or morphisms) of $C$ labeled by simplicial maps. A map of local systems is a natural transformation. A local system in the category of CDGA's and their maps  is called shortly a local system of CDGA's.  We use freely the terms about local systems  in \cite{GHT}. To ease a term, by a local system of vector spaces, we mean a locally constant local system in the category of (ungraded) vector spaces and $\kk$-linear maps.  Let $A_K$ ($=A_{PL, K}$ in the notation of \cite{GHT}) be the local system of CDGA's defined by $A_{K,\sigma}=A_{PL}(\Delta^{|\sigma|})$, the CDGA of  polynomial de Rham forms on the standard simplex of the same dimension as $\sigma$ with natural structure maps. As in \cite{GHT}, an $A_K$-algebra is a map $A_K\to B$ of local systems of CDGA's on $K$ such that $B$ is extendable and $H^*(B)$ is locally constant. The authors of \cite{GHT} proposed 1-connected $A_K$-algebras of finite type as  algebraic models of non-simply connected spaces.  For a fibration $p:E\to K$ of simplicial sets, they defined an $A_K$-algebra $\mathcal{F}(E,p)$ by setting $\mathcal{F}(E,p)_\sigma=A_{PL}(E\times _{K}\Delta^{|\sigma|})$ where the pullback is taken for the diagram $E\stackrel{p}{\to}K\stackrel{\sigma}{\leftarrow}\Delta^{|\sigma|}$,  and proved that this $A_K$-algebra recovers the fiberwise rational homotopy type of $E$ if  $K$ is a $K(\pi_1(E),1)$-space and $p$ is 2-connected i.e. $p$ induces bijections on $\pi_0$ and $\pi_1$  (for $\kk=\QQ$ under some finiteness assumption).  We call an $A_K$-algebra which is connected with $\mathcal{F}(E,p)$ by a zigzag of quasi-isomorphisms of $A_K$-algebras (see \cite{GHT}), a {\em local system model of $E$}. \\
\indent We shall recall another algebraic model from \cite{Hain, KPT, Pridham, Moriya}. Let $\pi$ be an abstract group. We say a representation of $\pi$ is semi-simple if it is isomorphic to a direct sum of finite dimensional irreducible representations.  We always regard a tensor of representations as a representation by the diagonal action. A $\pi$-CDGA is a CDGA equipped with a $\kk$-linear  action of $\pi$ which commutes with the differential and product. A map of $\pi$-CDGA's is a map of underlying CDGA's which commutes with the action of $\pi$ and a map of $\pi$-CDGA's is said to be a quasi-isomorphism if it induces an isomorphism on the cohomology. We say a $\pi$-CDGA  is semi-simple if its underlying representation  is semi-simple.  Let $X$ be a pointed connected space. We sometimes identify representations $V$ of $\pi_1(X)$ with  the local systems  of vector spaces on $X$ which associate the vector space $V$ to each simplex and the action of an element of $\pi_1(X)$ to each face map by fixing cycles in 1-skelton of $X$ representing elements of $\pi_1(X)$.  (This identification will be made explicit in the case of $X=T^2$ in next section.)  This identification naturally extends to the equivalence between the category of representations and their  homomorphisms  and the category of local systems of vector spaces and their maps, which sends a homomorphism $f:V\to V'$ to the map given by $f$ on each simplex. Let $V$ be a local system of vector spaces on $X$. For a local system of CDGA's $B$, we define a local system $B\otimes V$ (of complexes) by $(B\otimes V)_\sigma=B_\sigma\otimes V_\sigma$ with the obvious maps, where $V_\sigma$ is regarded as a complex concentrated in degree 0.  We define a complex $C^*_{PL}(X,V)$ by $C^*_{PL}(X,V)=\Gamma(A_{PL,X}\otimes V)$, where $\Gamma$ denotes the global section functor given in  Definition 1.1 of \cite{GHT}.  (This is the same as the complex given in  Definition 3.4.2 of \cite{Moriya}.) While we give the definition of  original equivariant CDGA model $A_{red}(X)$  using an affine group scheme as  basic information in the following, we will not use this definition in  the proofs. Instead, we use a  characterization of equivalent models in terms of dg-categories given in Lemma \ref{Ldga_dgc}. The pro-reductive completion $\pi^{red}$ of an abstract group $\pi$ is the affine group scheme whose finite dimensional representations are in one-to-one correspondence with the finite dimensional semi-simple representations of $\pi$ in a natural manner. Let $\OO(\pi^{red})$ denotes the coordinate ring of $\pi^{red}$ (or Hopf algebra representing $\pi^{red}$).   For a pointed connected space $X$, we regard $\OO(\pi_1(X)^{red})$ as a local system of $X$ by the right translation. We define a $\pi_1(X)$-CDGA
$A_{red}(X)$ as the complex $C^*_{PL}(X,\OO(\pi_1(X)^{red}))$ equipped with the product induced by the products on polynomial forms and the algebra  $\OO(\pi_1(X)^{red})$, and the action of $\pi_1(X)$ induced by the left translation on $\OO(\pi_1(X)^{red})$ (see Definitions 3.3.3 and 3.4.4 of \cite{Moriya}). Since  the translation on $\OO(\pi_1(X)^{red})$ is semi-simple, the action on $A_{red}(X)$ is also semi-simple. By this feature, $A_{red}(X)$ has a  minimal model similar to the original case of Sullivan. There exists a quasi-isomorphism $\NN\to A_{red}(X)$ of $\pi_1(X)$-CDGA such that the underlying CDGA of  $\NN$ is minimal (in the sense of Sullivan, see e.g. \cite{BG} or \cite{FHT})  and  $\NN$ is semi-simple. We call $\NN$ the {\em equivariant minimal model of $X$}.\\
\indent  To extract homotopical information such as the natural action of $\pi_1(X)$ on $\pi_n(X)\otimes_\ZZ \kk$ and twisted cohomology from the equivariant minimal model, we use dg-categories and extensions of their objects. We shall recall some related notions. A dg-category is a category whose sets of morphisms are furnished with structures of complexes for which the identities are cocycles and  the compositions $\Hom_C(V',V'')\otimes \Hom_C(V,V')\to \Hom_C(V,V'')$ are chain maps, see \cite{Simpson} or \cite{Moriya}. The category of 0-th cocycles $Z^0C$ is the $\kk$-linear category whose objects are  the same as those of $C$ and whose morphisms $V\to V'$ are the cocycles of degree $0$ in $\Hom_C(V,V')$. We mean by an isomorphism in $C$  an isomorphism in $Z^0C$.  We say a dg-functor $F:C\to C'$ between dg-categories is a quasi-equivalence if it induces an equivalence $Z^0C\to Z^0C'$ of categories and quasi-isomorphisms between the complexes of morphisms.  An extension in a dg-category $C$ is a pair of maps $V'\stackrel{p}{\to}\tilde V\stackrel{q}{\to} V$ of degree 0 such that $dp=0, dq=0, qp=0$, and a splitting exists. Here, a splitting of $(p,q)$ is a pair of maps $ V'\stackrel{\alpha}{\leftarrow}\tilde V\stackrel{\beta}{\leftarrow} V$ of degree 0 satisfying $\alpha\beta=0, \alpha p=id_{V'}, q\beta=id_V$, and $p\alpha +\beta q=id_{\tilde V}$. We call the class $[\alpha d\beta]\in H^1(\Hom_C(V,V'))$ the extension class of $(p,q)$. 
The following lemma is stated in \cite{Simpson} and the proof is easy and omitted.
\begin{lem}[\cite{Simpson}]\label{Lextension_isom}
Let $C$ be a dg-category and $V'\stackrel{p_i}{\to}V_i\stackrel{q_i}{\to} V$ an extension in $C$ with a splitting $V'\stackrel{\alpha_i}{\leftarrow}V_i\stackrel{\beta_i}{\leftarrow} V$ for $i=1,2$. If  $V_1$ and  $V_2$ have the same extension class, they are isomorphic. An isomorphism $V_1\to V_2$ is given by $p_2\alpha_1+\beta_2q_1-p_2\gamma q_1$ where $\gamma\in \Hom^0_C(V,V')$ is a chain with $d\gamma=\alpha_2d\beta_2-\alpha_1d\beta_1$.\hfill \qedsymbol
\end{lem}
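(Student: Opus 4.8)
The plan is to verify directly that the stated formula gives an isomorphism; everything reduces to differentiating the splitting relations together with one short computation. Write $\phi := p_2\alpha_1 + \beta_2 q_1 - p_2\gamma q_1 \colon V_1\to V_2$; all its building blocks have degree $0$, so $\phi$ has degree $0$. The first step is to record normal forms for $d\alpha_i$ and $d\beta_i$. Since $dp_i=dq_i=0$, applying $d$ to the splitting relations $\alpha_i p_i=\mathrm{id}_{V'}$, $q_i\beta_i=\mathrm{id}_V$, $\alpha_i\beta_i=0$, and $p_i\alpha_i+\beta_i q_i=\mathrm{id}_{V_i}$ gives $(d\alpha_i)p_i=0$, $q_i(d\beta_i)=0$, $(d\alpha_i)\beta_i=-\alpha_i(d\beta_i)$, and $p_i(d\alpha_i)+(d\beta_i)q_i=0$. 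Inserting $\mathrm{id}_{V_i}=p_i\alpha_i+\beta_i q_i$ and using the first three of these, one obtains
\[
 d\alpha_i=-e_i q_i,\qquad d\beta_i=p_i e_i,\qquad\text{where }e_i:=\alpha_i(d\beta_i)\in\Hom^1_C(V,V'),
\]
and $e_i$ is a cocycle representing the extension class of $(p_i,q_i)$.

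With these in hand the cocycle condition for $\phi$ is immediate: using $dp_2=dq_1=0$ and the normal forms,
\[
 d\phi=p_2(d\alpha_1)+(d\beta_2)q_1-p_2(d\gamma)q_1=p_2\bigl(e_2-e_1-d\gamma\bigr)q_1,
\]
which vanishes precisely because $d\gamma=\alpha_2 d\beta_2-\alpha_1 d\beta_1=e_2-e_1$; this is the only place the hypothesis that the two extension classes agree (through the existence of such a $\gamma$) is used. A routine check against the splitting relations also yields $\phi p_1=p_2$ and $q_2\phi=q_1$, so $\phi$ is a morphism of extensions.

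To see that $\phi$ is invertible I would exploit the symmetry of the setup. Put $\psi:=p_1\alpha_2+\beta_1 q_2+p_1\gamma q_2\colon V_2\to V_1$, which is the formula for $\phi$ with the indices $1,2$ interchanged and $\gamma$ replaced by $-\gamma$; this is legitimate since $d(-\gamma)=e_1-e_2=\alpha_1 d\beta_1-\alpha_2 d\beta_2$. The same computation shows that $\psi$ is a degree $0$ cocycle with $\psi p_2=p_1$ and $q_1\psi=q_2$. Hence $\psi\phi$ is a degree $0$ cocycle endomorphism of $V_1$ satisfying $(\psi\phi)p_1=p_1$ and $q_1(\psi\phi)=q_1$. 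Substituting $\mathrm{id}_{V_1}=p_1\alpha_1+\beta_1 q_1$ on both sides of $\psi\phi$ and using $\alpha_1 p_1=\mathrm{id}_{V'}$, $q_1\beta_1=\mathrm{id}_V$ and $q_1 p_1=0$, one finds $\psi\phi=\mathrm{id}_{V_1}+p_1 N q_1$ with $N:=\alpha_1(\psi\phi)\beta_1\in\Hom^0_C(V,V')$; since $q_1 p_1=0$ forces $(p_1 N q_1)^2=0$, the map $\mathrm{id}_{V_1}+p_1 N q_1$ is invertible with inverse $\mathrm{id}_{V_1}-p_1 N q_1$. Thus $\psi\phi$ is an isomorphism in $C$, and by the symmetric argument so is $\phi\psi$; therefore $\phi$ has both a left and a right inverse and is itself an isomorphism, which is the assertion of the lemma. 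The computation is entirely mechanical; the only steps that require a little care are the derivation of the normal forms for $d\alpha_i$ and $d\beta_i$ and the square-zero trick that promotes the morphism of extensions $\phi$ to an isomorphism, and it is the latter that I would regard as the (mild) crux.
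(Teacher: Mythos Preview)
Your argument is correct. The paper does not actually supply a proof of this lemma: it attributes the statement to Simpson and writes that ``the proof is easy and omitted,'' placing a \qedsymbol\ immediately after the statement. Your direct verification --- first deriving the normal forms $d\alpha_i=-e_iq_i$, $d\beta_i=p_ie_i$ from the splitting identities, then checking $d\phi=0$, and finally using the symmetric candidate $\psi$ together with the nilpotency of $p_1Nq_1$ to conclude invertibility --- is exactly the kind of routine computation the authors are alluding to, and it goes through without issue.
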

 We shall recall the completion of a dg-category due to Simpson \cite{Simpson}. A Maurer-Cartan (MC-) element on an object $V$ of a dg-cateory $C$ is an element $\eta\in \Hom^1_C(V,V)$ satisfying $d\eta+\eta^2=0$. Let $\bar C$ be a dg-category defined as follows. An object of $\bar C$ is a pair $(V,\eta)$ of an object of $C$ and an MC-element on it. The complex $\Hom_{\bar C}((V,\eta), (V',\eta'))$ of morphisms is the same as $\Hom_C(V,V')$ as a graded vector space, but its differential is given by $d_{\bar C}f=d_Cf+\eta' f-(-1)^{|f|}f\eta$. We identify $C$  with a full sub dg-category of $\bar C$ via the inclusion $V\mapsto (V,0)$.  $\bar C$ is complete, i.e. any class $[\omega]\in H^1(\Hom_{\bar C}((V',\eta'),(V,\eta)))$ comes from an extension. Concretely speaking, an extension which induces the class is given by $(V',\eta')\to \left( V'\oplus V, 
\left[
\begin{array}{cc}
\eta' &  \omega\\
0 & \eta 
\end{array}
\right]\right)\to (V,\eta)$ with an obvious splitting. Let $\widehat{C}\subset \bar C$ be the full sub dg-category spanned by  objects obtained by successive extensions of objects of $C$. The dg-category $\widehat C$ is called the completion of $C$ and  has a  universal property, see \cite{Simpson}.
For a simplicial set $X$, let $T_{PL}(X)$ be the dg-category defined as follows. The objects  of $T_{PL}(X)$ are the finite dimensional local systems of vector spaces on $X$ and the complex of morphisms is given by $\Hom_{T_{PL}(X)}(V,V')=C_{PL}(X,\mathfrak{Hom}(V,V'))$ where $\mathfrak{Hom}(V,V')$ is the natural hom-local system with $\mathfrak{Hom}(V,V')_\sigma=\{\kk\text{-linear maps :}V_\sigma\to V'_\sigma\}$. The composition is the natural one defined by the product of forms and the composition of maps. Let $T^{ss}_{PL}(X)\subset T_{PL}(X)$ be the full sub dg-category spanned by semi-simple local systems. By the universal property, we have quasi-equivalences $\widehat{T^{ss}_{PL}(X)}\simeq \widehat{T_{PL}(X)}\simeq T_{PL}(X)$. Let $B$ be a semi-simple $\pi_1(X)$-CDGA. We define dg-categories $\TT^{ss}B$ and $\TT B$ as follows.  The objects of $\TT^{ss}B$ are the finite dimensional semi-simple representations of $\pi_1(X)$ and $\Hom_{\TT^{ss}B}(V,V')$ is given by the complex of invariants $(B\otimes\mathfrak{Hom}(V,V'))^{\pi_1(X)}$ (for the diagonal action), with  restriction of the standard differential on the tensor product.  Here, $\mathfrak{Hom}(V,V')$ is the hom-representation given by the conjugate action on the space of linear maps $V\to V'$ and regarded as a complex concentrated in degree 0. The composition is defined similarly to $T_{PL}(X)$ using the product on $B$.  We set $\TT B=\widehat{\TT^{ss}B}$. The dg-categories $C=T_{PL}^{ss}(X)$ and  $\TT^{ss}B$ have a natural tensor (dg-)functor $\otimes: C\times C\to C$. For $T_{PL}^{ss}(X)$, it is given by the tensor of local systems on objects, and the combination of the product of forms and the tensor of maps on morphisms $: (\omega_1\cdot f_1, \omega_2\cdot f_2)\mapsto \omega_1\omega_2\cdot f_1\otimes f_2$. For $\TT^{ss}B$, it is defined similarly using the product on $B$. If $H^0(B)=\kk$, since $(\kk\otimes\mathfrak{Hom}(V,V'))^{\pi}$ is naturally bijective to the set of homomorphisms $V\to V'$, there exists a natural equivalence between $Z^0\TT^{ss}B$ and the category of finite dimensional semi-simple representations (and their homomorphisms), which gives the identity between the  object sets. Similarly, there is an obvious equivalence between $Z^0T_{PL}^{ss}(X)$ and the category of finite dimensional semi-simple local systems. By composing these equivalences with  the restriction of  the equivalence between representations and local systems mentioned above, we have the equivalence $Z^0\TT^{ss}B\simeq Z^0T^{ss}_{PL}(X)$.
\begin{lem}[Lemma 3.3.1 and Theorem 3.4.5 of \cite{Moriya}]\label{Ldga_dgc}
Under the above notations, for any semi-simple $\pi_1(X)$-CDGA $B$ which is connected to $A_{red}(X)$ by a zigzag of quasi-isomorphisms between semi-simple $\pi_1(X)$-CDGA's, we have a zigzag of quasi-equivalences 
\[
\TT^{ss}B\simeq \TT^{ss}A_{red}(X)\simeq  T^{ss}_{PL}(X),\quad 
\]
which preserves the tensor functors on the dg-categories and induces  the equivalence $Z^0\TT^{ss}B\simeq Z^0T^{ss}_{PL}(X)$ defined above. This zigzag induces the  zigzag of quasi-equivalences between the completions
\[
\TT B\simeq \TT A_{red}(X)\simeq T_{PL}(X).
\]
\hfill  \qedsymbol
\end{lem}
So, for a given finite dimensional representation $V$ of $\pi_1(X)$ (or local system on $X$), if we obtain the equivariant minimal model $\NN$ of $X$ and an object $(V_0,\eta)$ in $\TT\NN$ corresponding to $V$ through the zigzag in this lemma, we can compute the twisted cohomology $H^*(X,V)$ as the cohomology of $((\NN\otimes V_0)^{\pi_1(X)}, d_\eta=d+\eta)$.  In general, $V_0$ is (isomorphic to) the semi-simplification of $V$. In other words, if $\{0\}=V^0\subset V^1\subset\cdots\subset V^k=V$ is a sequence of $\pi_1(X)$-invariant subspaces such that $V^{i+1}/V^i$ is semi-simple for each $0\leq i\leq k-1$,  $V_0$ is isomorphic to $\oplus_{0\leq i\leq k-1}V^{i+1}/V^i$.  In next section,  we give explicit description of the representation corresponding to  a given cohomology class of degree 1 and  illustrate examples of MC-elements corresponding to  some non-semi-simple representations  in the case of  very simple group $\pi=\ZZ\times \ZZ$.

%%%%%NEW SECTION
\section{MC-elements corresponding to representations of $\ZZ\times \ZZ$}
Throughout the rest of this note,  
we set $\pi=\ZZ\times \ZZ$, and let $g_i$ denote a fixed generator of the $i$-th component of $\pi$ for $i=1,2$. We use the realization of the $K(\pi,1)$-space
\[
T^2=[0,1]^2/\{(0,t_2)\sim (1,t_2), \ (t_1,0)\sim (t_1,1)\}. 
\]
$T^2$ consists of  one 0-cell, two  1-cells $\sigma_1=\{(t_1,0)\}$, $\sigma_2=\{(0,t_2)\}$ and one 2-cell $\tau=\{(t_1,t_2)\}$. (We are formally working with simplicial sets so these cells are subdivided into simplexes implicitly.)
Furthermore, we always identify a representation $V$ of $\pi$ with a  local system $L$ of vector spaces on $T^2$ defined as follows.  We set $L_{\sigma}=V$ for each cell (or simplex) $\sigma$, and the face map $d_0$ corresponding to the inclusion $\{1\}\to[0,1]$ is given by the action of $g_i$ on $\sigma_i$ for $i=1,2$, and the other face map $d_1$ is the identity. We use the same notations for the corresponding local systems as the representations in what follows.
\begin{lem}\label{Lrealization}
Let $(V,r_i)$, $(V',r'_i)$ be two representations of $\pi$, where $r_i\in GL(V)$ and $r'_i\in GL(V')$ denote the actions of $g_i$ for $i=1,2$. Any class of degree 1 in $H^*(C_{PL}(T^2,\mathfrak{Hom}(V,V')))$ is represented by a cocycle of the form $\omega=P(t_1,t_2)dt_1+Q(t_1,t_2)dt_2$ where $P$ and $Q$ are polynomials of  $t_1, t_2$  (not  piecewise but defined on the entire square $[0,1]^2$)  with coeffcients in $\kk$-linear maps $V\to V'$. There exists a unique  polynomial $\mu(t_1,t_2)$  satisfying 
\[
\frac{\partial\mu}{\partial t_1}=P,\quad \frac{\partial\mu}{\partial t_2}=Q,\quad \mu(0,0)=0.
\] ( If $P$ and $Q$ are constant for $t_1, t_2$, clearly we have $\mu=t_1P+t_2Q$.) A representation $(\tilde V, \tilde r_i)$ which has the extension class $[\omega]$ is given by $\tilde V=V'\oplus V$  with $g_i$ acting as 
\[
\tilde r_i=\left[
\begin{array}{cc}
r'_i &-r'_i\mu(2-i,i-1) \\
0 & r_i
\end{array}\right].
\]
A splitting $V'\stackrel{\alpha}{\leftarrow}\tilde V\stackrel{\beta}{\leftarrow}V$ is given by
\[
\alpha=\biggl[ id_{V'}\ \  -\mu(t_1,t_2)\biggr],\qquad \beta=\left[
\begin{array}{c}
\displaystyle \mu(t_1,t_2) \vspace{2mm}\\
id_V
\end{array}
\right].
\]
\end{lem}
\begin{proof}
The claim about the form of a cocycle follows from Proposition A.1.4 of \cite{Moriya}. Since $\omega$ is a cocycle, we have $\frac{\partial P}{\partial t_2}=\frac{\partial Q}{\partial t_1}$. By this equation,  if we put
\[
\begin{split}
\mu(t_1,t_2) &=\int_0^{t_1}P(\tau, t_2)d\tau+\int_0^{t_2}Q(0,\tau)d\tau \\
(&=\int_0^{t_2}Q(t_1, \tau)d\tau+\int_0^{t_1}P(\tau,0)d\tau),
\end{split}
\] $\mu$ satisfies the equations in the claim.  Since $\omega$ is a global section, we have
\[
r'_2P(t_1,1)r_2^{-1}=P(t_1,0),\qquad r'_1P(1,t_2)r_1^{-1}=P(0,t_2),
\]
which imply the equations
\[
r'_2\{\mu(t_1,1)-\mu(0,1)\}r_2^{-1}=\mu(t_1,0),\qquad r'_1\{\mu(1,t_2)-\mu(1,0)\}r_1^{-1}=\mu(0,t_2).
\]
By these equations, we have
\[
\begin{split}
\tilde r_1\tilde r_2&=
\left[
\begin{array}{cc}
r'_1r'_2 & -r_1'r_2'\mu(0,1)-r_1'\mu(1,0)r_2 \\
0 & r_1r_2
\end{array}
\right]\\
&=\left[
\begin{array}{cc}
r'_1r'_2 & -r_1'r_2'\mu(0,1)-r_1'(r_2'(\mu(1,1)-\mu(0,1))r_2^{-1})r_2 \\
0 & r_1r_2
\end{array}
\right]\\
&=\left[
\begin{array}{cc}
r'_1r'_2 & -r_1'r_2'\mu(1,1) \\
0 & r_1r_2
\end{array}
\right]=\tilde r_2\tilde r_1,
\end{split}
\]
so $\tilde V$ is a well-defined representation.
Similarly, we can verify that $\alpha$ is well-defined as follows.
\[
\begin{split}
r'_1(\alpha|_{t_1=1})\tilde r_1^{-1}&=
r'_1\biggl[ id_{V'}\ \ -\mu(1,t_2)\biggr]
\left[
\begin{array}{cc}
(r'_1)^{-1} &\mu(1,0)r_1^{-1} \vspace{2mm}\\
0 & r_1^{-1}
\end{array}\right]\\
&=\biggl[id_{V'}\ \ r'_1\mu(1,0)r_1^{-1}-r'_1\mu(1,t_2)r_1^{-1}\biggr] \\
&=\biggl[id_{V'}\ \ -\mu(0,t_2)\biggr]=\alpha|_{t_1=0}.
\end{split}
\]
The verification on $\beta$ is similar and the rest of conditions on the splitting is easily verified.
\end{proof}

\begin{exa}\label{Eextension}
In the following examples (1), (2), all representations are supposed to have the trivial action of $g_2$, and expressed as the action of $g_1$.
\begin{enumerate}
\item We consider the 2-dimensional representation 
$V_1=\left[\begin{array}{cc} 
c & e \\
0 & c
\end{array}\right]$. By Lemma \ref{Lrealization}, it is easy to see that $\omega=-(e/c)dt_1$ corresponds to the extension $(\kk, c)\to V_1\to (\kk, c)$. By Lemma \ref{Lextension_isom}, $V_1$ is isomorphic to the object 
$V_2=\left(\left[\begin{array}{cc}
c & 0 \\
0 & c
\end{array}\right], \eta=\left[\begin{array}{cc}
0 & -(e/c)dt_1 \\
0 & 0
\end{array}\right]\right)$ in $\widehat{T_{PL}(T^2)}$. An isomorphism $V_1\to V_2$ is given by 
\[
\left[
\begin{array}{cc}
1 & (e/c)t_1 \\
0 & 1
\end{array}
\right].
\]

\item We consider the 3-dimensional representation 
$V_3=\left[\begin{array}{ccc} 
c & e & h \\
0 & c & f \\
0 & 0 & c
\end{array}\right]$.
By Lemma \ref{Lrealization}, a cocycle representing the class of the extension $V_1\to V_3\to (\kk, c)$ is given by
 \[
-\left[
\begin{array}{cc}
c & e \\
0 & c
\end{array}
\right]^{-1}
\left[
\begin{array}{c}
h \\
f
\end{array}\right]
dt_1=
-c^{-2}\left[
\begin{array}{c}
ch-ef \\
cf
\end{array}\right]dt_1.\] The pushforward by the isomorphism in (1) sends this cocycle to 
the cocycle
\[-c^{-2}\cdot {}^t[
ch-(1-t_1)ef, \ 
cf
]dt_1,\] where ${}^t(-)$ denotes the transposition. Since 
${}^t[
t_1(1-t_1), \ 
0
]$ belongs to 
\[
\left(C^*_{PL}\left(T^2,\left[
\begin{array}{cc}
c & 0 \\
0 & c
\end{array}
\right]\right), d_\eta=d+\eta\right)
\] and $d_\eta{}^t[
t_1(1-t_1), \ 
0
]={}^t[1-2t_1,\ 
0]
dt_1$, last cocycle is cohomologous to
\[
-c^{-2}\cdot{}^t[
ch-ef/2, \ 
cf
]dt_1.
\] So $V_3$ is isomorphic to 
\[
V_4=\left(\left[\begin{array}{ccc} 
c & 0 & 0 \\
0 & c & 0 \\
0 & 0 & c
\end{array}\right], -c^{-2}\left[\begin{array}{ccc} 
0 & ce & ch-ef/2 \\
0 & 0 & cf \\
0 & 0 & 0
\end{array}\right]dt_1\right)\] in $\widehat{T_{PL}(T^2)}$.
\item We consider the 3-dimensional representation $V_5$ on which $g_1$ and $g_2$ act by
\[
\left[
\begin{array}{ccc}
c_1 & e_1 & 0 \\
0 & c_1 & 0  \\
0 & 0  & c_1
\end{array}
\right] \text{\  and\  }\left[
\begin{array}{ccc}
c_2 & 0 & e_2 \\
0 & c_2 & 0  \\
0 & 0  & c_2
\end{array}
\right], 
\]
respectively. Similarly to (2), we see that $V_5$ is isomorphic to 
\[
\left(V^{ss}_5, 
\left[
\begin{array}{ccc}
0 & -(e_1/c_1)dt_1 &-(e_2/c_2)dt_2 \\
0 & 0                   & 0                  \\
0 & 0                   & 0        
\end{array}
\right]\right)     
\]
in $\widehat{T_{PL}(T^2)}$ where $V^{ss}_5$ is the 3-dimensional representation on which $g_i$ acts by the scalar multiplication of $c_i$ for $i=1,2$.
\end{enumerate} 
\end{exa}

\section{An example of  local system of CDGA's}
We shall define our main example of $A_{T^2}$-algebra $\LL$ which is a mild, non-nilpotent generalization of Example 6.7 of \cite{GHT}. We set
$(\Lambda Z, d)=(\Lambda (x,y,z, w, u),d)$ with $x,\ y,\ z $ cocycles of degree 3, $w$ a cocycle of degree 6, $\deg u=5,\ du=yz$. We set $\LL_\sigma=A_{T^2,\sigma}\otimes   (\Lambda Z, d)$ for each cell $\sigma$. To define a local system, it is enough to define face maps for the cell structure given in the beginning of previous section.  Let $a_i, b_i$ be invertible elements in $\kk$ for $i=1,2$. Let $d_j$ be the degeneracy map corresponding to the map $\{1-j\}\subset [0,1]$ for $j=0,1$ as before. The map $d_0$ on $\sigma_1$ is given by $d_0(x)=a_1x,\  d_0(y)=b_1y, d_0(z)=a_1z,\ d_0(w)=a_1b_1(w+xy),$ and $d_0(u)=a_1b_1u$. $d_0$ on $\sigma_2$ is given by $d_0(x)=a_2(x+z),\ d_0(y)=b_2y,\ d_0(z)=a_2z,\ d_0(w)=a_2b_2w,$ and $d_0(u)=a_2b_2u$. We set $d_1=id$ on $\sigma_i$ for $i=1,2$. The map $d_{ij}: \LL_\tau\to\LL_{\sigma_i}$  corresponding to the inclusion $[0,1]\to [0,1]^{\times 2}, \ t\mapsto \left\{
\begin{array}{cc}
(t, 1-j) & (i=1) \\
(1-j,t) & (i=2)
\end{array}
\right.$ is given by the following formulas for $i=1,2, j=0,1$. $d_{i0}(y)=b_{3-i}y,\ d_{i0}(z)=a_{3-i}z,\ d_{i0}u=a_{3-i}b_{3-i}u$ for $i=1,2$, and $d_{10}(x)=a_2(x+z),\ d_{20}(x)=a_1x$,  and $d_{10}(w)=a_2b_2(w-t_1yz-dt_1u),\ d_{20}(w)=a_1b_1(w+xy)$, and $d_{i1}=id$  (see Figure \ref{Fface}). This is well-defined as a 1-connected $A_{T^2}$-algebra of finite type  since this is extendable for some subdivision of $T^2$, but this is not an $A$-minimal model in the sense of Definition 3.5 of \cite{GHT}, where we put $A=A_{T^2}$. For our purpose, this local system model works well enough. 
\begin{figure}[H]
\begin{center}
%WinTpicVersion4.32a
{\unitlength 0.1in%
\begin{picture}(32.5500,13.0800)(0.7000,-13.6800)%
% BOX 2 0 3 0 Black White  
% 2 420 225 1425 1231
% 
\special{pn 8}%
\special{pa 420 225}%
\special{pa 1425 225}%
\special{pa 1425 1231}%
\special{pa 420 1231}%
\special{pa 420 225}%
\special{pa 1425 225}%
\special{fp}%
% VECTOR 2 0 3 0 Black White  
% 2 920 631 920 275
% 
\special{pn 8}%
\special{pa 920 631}%
\special{pa 920 275}%
\special{fp}%
\special{sh 1}%
\special{pa 920 275}%
\special{pa 900 342}%
\special{pa 920 328}%
\special{pa 940 342}%
\special{pa 920 275}%
\special{fp}%
% VECTOR 2 0 3 0 Black White  
% 2 920 825 920 1191
% 
\special{pn 8}%
\special{pa 920 825}%
\special{pa 920 1191}%
\special{fp}%
\special{sh 1}%
\special{pa 920 1191}%
\special{pa 940 1124}%
\special{pa 920 1138}%
\special{pa 900 1124}%
\special{pa 920 1191}%
\special{fp}%
% VECTOR 2 0 3 0 Black White  
% 2 1020 725 1385 725
% 
\special{pn 8}%
\special{pa 1020 725}%
\special{pa 1385 725}%
\special{fp}%
\special{sh 1}%
\special{pa 1385 725}%
\special{pa 1318 705}%
\special{pa 1332 725}%
\special{pa 1318 745}%
\special{pa 1385 725}%
\special{fp}%
% VECTOR 2 0 3 0 Black White  
% 2 820 731 465 731
% 
\special{pn 8}%
\special{pa 820 731}%
\special{pa 465 731}%
\special{fp}%
\special{sh 1}%
\special{pa 465 731}%
\special{pa 532 751}%
\special{pa 518 731}%
\special{pa 532 711}%
\special{pa 465 731}%
\special{fp}%
% STR 2 0 3 0 Black White  
% 4 920 675 920 725 5 0 0 0
% $x$
\put(9.2000,-7.2500){\makebox(0,0){$x$}}%
% STR 2 0 3 0 Black White  
% 4 315 681 315 731 5 0 0 0
% $x$
\put(3.1500,-7.3100){\makebox(0,0){$x$}}%
% STR 2 0 3 0 Black White  
% 4 920 1288 920 1338 5 0 0 0
% $x$
\put(9.2000,-13.3800){\makebox(0,0){$x$}}%
% STR 2 0 3 0 Black White  
% 4 915 75 915 125 5 0 0 0
% $a_2(x+z)$
\put(9.1500,-1.2500){\makebox(0,0){$a_2(x+z)$}}%
% STR 2 0 3 0 Black White  
% 4 1570 675 1570 725 5 0 0 0
% $a_1x$
\put(15.7000,-7.2500){\makebox(0,0){$a_1x$}}%
% BOX 2 0 3 0 Black White  
% 2 2320 233 3325 1239
% 
\special{pn 8}%
\special{pa 2320 233}%
\special{pa 3325 233}%
\special{pa 3325 1239}%
\special{pa 2320 1239}%
\special{pa 2320 233}%
\special{pa 3325 233}%
\special{fp}%
% VECTOR 2 0 3 0 Black White  
% 2 2820 639 2820 283
% 
\special{pn 8}%
\special{pa 2820 639}%
\special{pa 2820 283}%
\special{fp}%
\special{sh 1}%
\special{pa 2820 283}%
\special{pa 2800 350}%
\special{pa 2820 336}%
\special{pa 2840 350}%
\special{pa 2820 283}%
\special{fp}%
% VECTOR 2 0 3 0 Black White  
% 2 2820 833 2820 1199
% 
\special{pn 8}%
\special{pa 2820 833}%
\special{pa 2820 1199}%
\special{fp}%
\special{sh 1}%
\special{pa 2820 1199}%
\special{pa 2840 1132}%
\special{pa 2820 1146}%
\special{pa 2800 1132}%
\special{pa 2820 1199}%
\special{fp}%
% VECTOR 2 0 3 0 Black White  
% 2 2920 733 3285 733
% 
\special{pn 8}%
\special{pa 2920 733}%
\special{pa 3285 733}%
\special{fp}%
\special{sh 1}%
\special{pa 3285 733}%
\special{pa 3218 713}%
\special{pa 3232 733}%
\special{pa 3218 753}%
\special{pa 3285 733}%
\special{fp}%
% VECTOR 2 0 3 0 Black White  
% 2 2720 739 2365 739
% 
\special{pn 8}%
\special{pa 2720 739}%
\special{pa 2365 739}%
\special{fp}%
\special{sh 1}%
\special{pa 2365 739}%
\special{pa 2432 759}%
\special{pa 2418 739}%
\special{pa 2432 719}%
\special{pa 2365 739}%
\special{fp}%
% STR 2 0 3 0 Black White  
% 4 2820 683 2820 733 5 0 0 0
% $w$
\put(28.2000,-7.3300){\makebox(0,0){$w$}}%
% STR 2 0 3 0 Black White  
% 4 2215 689 2215 739 5 0 0 0
% $w$
\put(22.1500,-7.3900){\makebox(0,0){$w$}}%
% STR 2 0 3 0 Black White  
% 4 2820 1279 2820 1329 5 0 0 0
% $w$
\put(28.2000,-13.2900){\makebox(0,0){$w$}}%
% STR 2 0 3 0 Black White  
% 4 2815 83 2815 133 5 0 0 0
% $a_2b_2(w-t_1yz-dt_1u)$
\put(28.1500,-1.3300){\makebox(0,0){$a_2b_2(w-t_1yz-dt_1u)$}}%
% STR 2 0 3 0 Black White  
% 4 3745 671 3745 721 5 0 0 0
% $a_1b_1(w+xy)$
\put(37.4500,-7.2100){\makebox(0,0){$a_1b_1(w+xy)$}}%
% VECTOR 2 0 3 0 Black White  
% 2 320 633 320 283
% 
\special{pn 8}%
\special{pa 320 633}%
\special{pa 320 283}%
\special{fp}%
\special{sh 1}%
\special{pa 320 283}%
\special{pa 300 350}%
\special{pa 320 336}%
\special{pa 340 350}%
\special{pa 320 283}%
\special{fp}%
% VECTOR 2 0 3 0 Black White  
% 2 320 838 320 1188
% 
\special{pn 8}%
\special{pa 320 838}%
\special{pa 320 1188}%
\special{fp}%
\special{sh 1}%
\special{pa 320 1188}%
\special{pa 340 1121}%
\special{pa 320 1135}%
\special{pa 300 1121}%
\special{pa 320 1188}%
\special{fp}%
% VECTOR 2 0 3 0 Black White  
% 2 820 1338 475 1338
% 
\special{pn 8}%
\special{pa 820 1338}%
\special{pa 475 1338}%
\special{fp}%
\special{sh 1}%
\special{pa 475 1338}%
\special{pa 542 1358}%
\special{pa 528 1338}%
\special{pa 542 1318}%
\special{pa 475 1338}%
\special{fp}%
% VECTOR 2 0 3 0 Black White  
% 2 1020 1338 1370 1338
% 
\special{pn 8}%
\special{pa 1020 1338}%
\special{pa 1370 1338}%
\special{fp}%
\special{sh 1}%
\special{pa 1370 1338}%
\special{pa 1303 1318}%
\special{pa 1317 1338}%
\special{pa 1303 1358}%
\special{pa 1370 1338}%
\special{fp}%
% STR 2 0 3 0 Black White  
% 4 225 418 225 468 5 0 0 0
% $d_0$
\put(2.2500,-4.6800){\makebox(0,0){$d_0$}}%
% STR 2 0 3 0 Black White  
% 4 220 958 220 1008 5 0 0 0
% $d_1$
\put(2.2000,-10.0800){\makebox(0,0){$d_1$}}%
% STR 2 0 3 0 Black White  
% 4 650 1383 650 1433 5 0 0 0
% $d_1$
\put(6.5000,-14.3300){\makebox(0,0){$d_1$}}%
% STR 2 0 3 0 Black White  
% 4 1195 1383 1195 1433 5 0 0 0
% $d_0$
\put(11.9500,-14.3300){\makebox(0,0){$d_0$}}%
\end{picture}}%
\end{center}
\vspace{2mm}
\caption{Face maps}\label{Fface}
\end{figure}
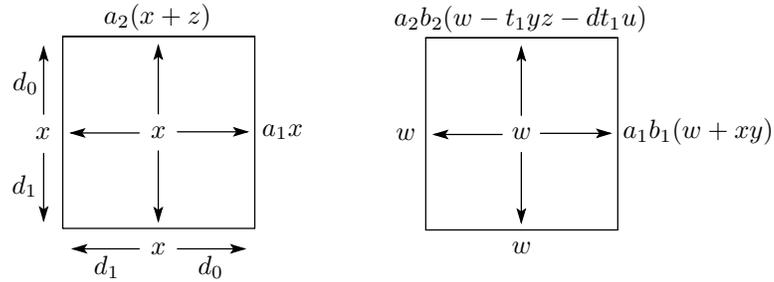
\indent In the rest of the note $X$ denotes the space corresponding to $\LL$ through the equivalence of \cite{GHT}. More precisely, the pointed simplicial set $X$ is the image of the $A_{T^2}$-minimal model of $\LL$ by the realization functor $\langle\ \rangle$ defined there. To compute the equivariant minimal model of $X$, we first give the  equivariant minimal model of $T^2$. The following lemma is a special case of Theorem 4.2.3 of \cite{Moriya}. We give a shorter proof for this case here.
\begin{lem}\label{Lmodel_of_T2}
The equivariant minimal model of $T^2$ is given by the $\pi$-CDGA $\Lambda(s_1,s_2)$ with $s_1, s_2$ cocycles of degree 1 with trivial action. 
\end{lem}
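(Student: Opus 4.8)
The plan is to compute $H^{*}$ of $A_{red}(T^{2})$ as a $\pi$-CDGA, identify it with $\Lambda(s_1,s_2)$ carrying the trivial action and zero differential, and then realize the latter by an honest quasi-isomorphism to $A_{red}(T^{2})$. First I would decompose $\OO(\pi^{red})$ by the algebraic Peter--Weyl theorem: as a bimodule for left and right translations, $\OO(\pi^{red})\cong\bigoplus_{W}W\otimes W^{*}$, the sum running over representatives of the irreducible representations $W$ of $\pi^{red}$, equivalently of the irreducible semi-simple representations of $\pi$. Because $T^{2}$ is a finite simplicial set, $C^{*}_{PL}(T^{2},-)$ commutes with this direct sum, so as a $\pi$-algebra $H^{*}(A_{red}(T^{2}))=H^{*}(T^{2},\OO(\pi^{red}))\cong\bigoplus_{W}W\otimes H^{*}(T^{2},W^{*})$, with $\pi$ acting through the left-translation factor $W$. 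The crux is that $H^{*}(T^{2},W^{*})=H^{*}(\ZZ\times\ZZ,W^{*})$ vanishes for every nontrivial irreducible $W$: after extending scalars to $\bar\kk$, $W^{*}$ becomes a sum of characters $\chi$, all nontrivial since $W$ is irreducible and nontrivial, and for a nontrivial character one has $H^{0}(\ZZ\times\ZZ,\bar\kk_{\chi})=(\bar\kk_{\chi})^{\ZZ\times\ZZ}=0$ and $H^{2}(\ZZ\times\ZZ,\bar\kk_{\chi})=(\bar\kk_{\chi})_{\ZZ\times\ZZ}=0$ (some $g_{i}$ acts by a scalar $\ne 1$), hence $H^{1}=0$ since the Euler characteristic of $T^{2}$ is $0$. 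Thus only $W=\kk$ survives, and $H^{*}(A_{red}(T^{2}))\cong H^{*}(T^{2};\kk)=\Lambda(s_1,s_2)$ with $\deg s_i=1$, as graded algebras, with the trivial $\pi$-action.

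Next, $(\Lambda(s_1,s_2),d=0)$ with trivial $\pi$-action is a semi-simple $\pi$-CDGA whose underlying CDGA is minimal in Sullivan's sense, being freely generated by elements of degree $1$ with (trivially) decomposable differential. It remains to connect it to $A_{red}(T^{2})$ by a quasi-isomorphism of $\pi$-CDGA's. Since the source is free with zero differential, a $\pi$-equivariant CDGA map $\phi\colon\Lambda(s_1,s_2)\to A_{red}(T^{2})$ amounts to a choice of two $\pi$-invariant $1$-cocycles $\omega_{i}=\phi(s_i)$. Such cocycles are available: a regular function on $\pi^{red}$ invariant under left translation by $\pi$ is constant (the image of $\pi$ is Zariski dense), so the $\pi$-invariant subalgebra of $A_{red}(T^{2})$ equals $C^{*}_{PL}(T^{2},\kk)$, which is precisely the trivial-isotypic summand; hence its $H^{1}$ maps isomorphically onto $H^{1}(A_{red}(T^{2}))$. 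Choosing $\omega_{1},\omega_{2}$ representing a basis of $H^{1}$, the map $\phi$ is an isomorphism on $H^{0}$ and $H^{1}$ and sends $[\omega_{1}\omega_{2}]$ to a generator of $H^{2}$ by the ring structure found above, while both sides vanish in degrees $\ge 3$; therefore $\phi$ is a quasi-isomorphism. By the standard uniqueness of minimal models this identifies $\Lambda(s_1,s_2)$ with trivial action as the equivariant minimal model of $T^{2}$.

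The main obstacle is the cohomological vanishing in the first step — concretely, justifying that $C^{*}_{PL}(T^{2},-)$ commutes with the Peter--Weyl decomposition of $\OO(\pi^{red})$ and that the residual left-translation action on the surviving summand is trivial, so that $H^{*}(A_{red}(T^{2}))$ reduces to the group cohomology of $\ZZ\times\ZZ$ with trivial coefficients. The remaining points (the splitting conditions implicit in the cocycle manipulation, and minimality of $\Lambda(s_1,s_2)$) are routine.
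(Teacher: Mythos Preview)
Your argument is correct, and the decisive computation --- that $H^*(T^2,V)=0$ for every nontrivial irreducible $\pi$-representation $V$, forcing only the trivial isotypic piece to survive --- is exactly the one the paper uses. The organization, however, differs. You work directly with $A_{red}(T^2)$: apply the Peter--Weyl decomposition $\OO(\pi^{red})\cong\bigoplus_W W\otimes W^*$, pass it through $C^*_{PL}(T^2,-)$ (legitimate since the simplicial model of $T^2$ is finite), identify the cohomology as $\Lambda(s_1,s_2)$ with trivial action, and then write down an explicit $\pi$-equivariant map $\Lambda(s_1,s_2)\to A_{red}(T^2)$ by hand. The paper instead starts from the abstract equivariant minimal model $\NN$ (whose existence is assumed), uses the dg-category comparison of Lemma~\ref{Ldga_dgc} to interpret $(\NN\otimes V)^\pi$ as computing $H^*(T^2,V)$, deduces that the inclusion $\NN^\pi\hookrightarrow\NN$ is a quasi-isomorphism, and then identifies $\NN^\pi$ with $\Hom_{\TT^{ss}\NN}(\kk,\kk)\simeq A_{PL}(T^2)$, again via Lemma~\ref{Ldga_dgc}. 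Your route is more self-contained --- it neither presupposes that a minimal model exists nor invokes the dg-category machinery --- at the cost of having to justify the commutation of global sections with the infinite Peter--Weyl sum. The paper's route illustrates how Lemma~\ref{Ldga_dgc} packages such bookkeeping, which is the point of view used later in the proof of Theorem~\ref{Tmain}.
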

\begin{proof}
Let $\NN$ be the equivariant minimal model of $T^2$. Since the semi-simplicity of representations is preserved by  extension of scalars, we may assume that $\kk$ is algebraically closed. Since commutative matrixes are triangularizable simultaneously, the irreducible representations of $\pi$ are 1-dimensional. By straightforward computation, $H^*(T^2,V)=0$ for a non-trivial irreducible representation $V$ and clearly $H^*(T^2,V)=\Lambda(s_1, s_2)\otimes V$ for a trivial $V$.
This means that the map $(\NN^{\pi}\otimes V)^{\pi}\to (\NN\otimes V)^{\pi} (\simeq C^*_{PL}(T^2,V))$ induced by the inclusion  $\NN^{\pi}\to \NN$ is a quasi-isomorphism for any irreducible $V$, where $\pi$ acts on the subalgebra of invariants $\NN^{\pi}$ trivially. Since $\NN$ is semi-simple, we see that the inclusion $\NN^{\pi}\to \NN$ is a quasi-isomorphism.  If $V$ is a 1-dimensional trivial representation, $\NN^{\pi}$ is isomorphic to the endomorphism algebra $\Hom_{\TT^{ss}\NN}(V,V)$ by definition, which in turn, is quasi-isomorphic to $A_{PL}(T^2)$ as a CDGA by Lemma \ref{Ldga_dgc}.  These observations imply the claim.
\end{proof}
\begin{rem}
In the proof of Lemma \ref{Lmodel_of_T2} and the proof of Theorem \ref{Tmain} below, we apparently use dg-categories. This is mainly for efficiency of notations and we may  write down these proofs using a concrete description of $(\ZZ\times \ZZ)^{red}$ and the standard interpretation of representations as comodules over $\OO((\ZZ\times \ZZ)^{red})$ instead of dg-categories based on the same referred facts.  In the algebraic description of  the cohomology of $X$ with twisted coefficients  given below,  we need dg-categories (or some similar algebraic objects) more essentially since it is based on the latter zigzag of Lemma \ref{Ldga_dgc}.
\end{rem}
We define a semi-simple $\pi$-CDGA $\MM$. We set $\MM=\Lambda(s_1,s_2,\bar x,\bar y,\bar z,\bar w,\bar u)$ with $s_1, s_2$ cocycles of degree 1, $\bar y,\ \bar z$ cocycles of degree 3, $\deg \bar x=3,\ \  d\bar x=s_2\bar z$,\ \ $\deg\bar u=5,\ \  d\bar u=\bar y\bar z$,\ \ $\deg \bar w=6,$ and \ $d\bar w=s_1\bar x\bar y-s_1s_2\bar u$. The action of $\pi$ on $\MM$ is given by $g_i\cdot s_1=s_1$,\ \ $g_i\cdot s_2=s_2$,\ \ $g_i\cdot \bar x=a_i\bar x$,\ \ $g_i\cdot \bar y=b_i\bar y$,\ \ $g_i\cdot \bar z=a_i\bar z$,\ \  $g_i\cdot \bar w=a_ib_i\bar w$,\ and \ $g_i\cdot \bar u=a_ib_i\bar u$ for $i=1,2$. \\
\indent $\pi_i(X)$ has a topologically defined natural action of $\pi=\pi_1(X)$ (see e.g. \cite{Hatcher} or \cite{Whitehead}). We denote by $\pi_i(X)^\vee$ the linear dual  of $\pi_i(X)\otimes_{\ZZ}\kk$ or the dual representation by abuse of notations.
\begin{thm}\label{Tmain}
With the above notations, the equivariant minimal model of $X$ is (isomorphic to) $\MM$.
\end{thm}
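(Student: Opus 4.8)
The plan is to show that $\MM$ is a semi-simple minimal Sullivan $\pi$-CDGA connected to $A_{red}(X)$ by a zigzag of quasi-isomorphisms between semi-simple $\pi$-CDGA's; uniqueness of the equivariant minimal model then yields $\MM\cong\NN$, where $\NN$ denotes the equivariant minimal model of $X$. The preliminary checks are routine: one verifies $d^2=0$ on $\MM$; that the generators may be ordered as $s_1,s_2,\bar y,\bar z,\bar x,\bar u,\bar w$ so that each differential lands in the free algebra on the preceding generators and is decomposable, whence $(\MM,d)$ is minimal in Sullivan's sense; and that $d$ is $\pi$-equivariant, the action being by scalars on the generating space, so $\MM$ is semi-simple. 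Note also that $\Lambda(s_1,s_2)\subset\MM$ is a sub-$\pi$-CDGA with trivial action and zero differential, so $\MM$ is a relative minimal Sullivan algebra over the equivariant minimal model $\Lambda(s_1,s_2)$ of $T^2$ (Lemma~\ref{Lmodel_of_T2}), reflecting the fibration $p\colon X\to T^2$.

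To connect $\MM$ with $A_{red}(X)$, I would first pass to a model of $A_{red}(X)=\Gamma(A_{PL,X}\otimes\OO(\pi^{red}))$ living over $T^2$: since $p$ is an isomorphism on $\pi_1$, a push--pull argument together with the homotopy invariance of $\Gamma$ on extendable local systems \cite{GHT} and the zigzag $\LL\simeq\mathcal F(X,p)$ of $A_{T^2}$-algebras produces a zigzag of quasi-isomorphisms of semi-simple $\pi$-CDGA's between $A_{red}(X)$ and $\Gamma_{T^2}(\LL\otimes\OO(\pi^{red}))$, the latter a $\pi$-CDGA via left translation on $\OO(\pi^{red})$. It then suffices to construct a zigzag of semi-simple $\pi$-CDGA quasi-isomorphisms from $\MM$ to $\Gamma_{T^2}(\LL\otimes\OO(\pi^{red}))$. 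I would build this by extending the quasi-isomorphism $\Lambda(s_1,s_2)\to A_{red}(T^2)$ of Lemma~\ref{Lmodel_of_T2} (with $s_i\mapsto$ a cocycle representing $[dt_i]$) to a $\pi$-CDGA map $\MM\to\Gamma_{T^2}(\LL\otimes\OO(\pi^{red}))$ that sends $\bar x,\bar y,\bar z,\bar u,\bar w$ to explicit twisted cochains assembled from $x,y,z,u,w$ together with a distinguished section of $\OO(\pi^{red})$ implementing the semisimplification of the monodromy --- the ``whole fiber at once'' analogue of the splitting maps $\alpha,\beta$ of Lemma~\ref{Lrealization} and of the cochains used in Example~\ref{Eextension}. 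With the correct choices, the $s$-linear terms of $d\bar x$ and $d\bar w$ are forced to be the unipotent parts of the monodromy operators $d_0$ on $\sigma_1,\sigma_2$ (read off directly from the face maps of $\LL$, e.g.\ $d_0(x)=a_2(x+z)$ on $\sigma_2$ and $d_0(w)=a_1b_1(w+xy)$ on $\sigma_1$), while the quadratic term $-s_1s_2\bar u$ of $d\bar w$ comes from the coherence datum $d_{10}(w)=a_2b_2(w-t_1yz-dt_1u)$ in the $2$-cell: the generator $u$ with $du=yz$ is precisely what trivializes the otherwise non-vanishing ``commutator'' of the two monodromies on $w$, and the form-valued term $dt_1u$ records this trivialization.

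Finally I would prove this map is a quasi-isomorphism by filtering both sides by powers of the differential ideal generated by $s_1,s_2$ (equivalently, along the cells of $T^2$ via $\LL$). On the associated graded the map reduces to the comparison of the minimal model $(\Lambda Z,d)$ of the fiber with the pointwise restriction of $\LL$, a quasi-isomorphism by construction, together with Lemma~\ref{Lmodel_of_T2} in the base direction, so a standard spectral-sequence comparison finishes the argument, convergence being automatic by finiteness of type. The step I expect to be the real obstacle is purely bookkeeping but genuinely delicate: extracting the $\pi$-equivariant twisted differential of $\MM$ honestly from the local-system data --- simultaneously keeping track, for each monodromy, of its semisimple and unipotent parts, of the fact that the $2$-cell transition maps are form-valued rather than scalar, and of $\pi$-equivariance --- so that the whole computation is organised through the dg-category formalism of Lemma~\ref{Ldga_dgc} and the extension calculus of Example~\ref{Eextension} rather than by brute force. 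Once $\MM$ is known to be a semi-simple minimal $\pi$-CDGA quasi-isomorphic to $A_{red}(X)$, uniqueness of equivariant minimal models gives $\MM\cong\NN$.
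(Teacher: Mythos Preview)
Your proposal follows the same architecture as the paper: produce a zigzag of semi-simple $\pi$-CDGA quasi-isomorphisms from $\MM$ through global sections of $\LL$ tensored with a coefficient representation, build an explicit $\pi$-equivariant map from $\MM$ into that target, and then verify it is a quasi-isomorphism. The paper, however, streamlines both of the steps you flag as delicate. First, instead of the full $\OO(\pi^{red})$ it uses a small hand-built semi-simple algebra $U=\bigoplus_{(k,l)\in S}L_1^k\otimes L_2^l$, where $L_1,L_2$ are the one-dimensional characters $g_i\mapsto a_i^{-1}$, $g_i\mapsto b_i^{-1}$ and $S$ picks one representative per isomorphism class; $U$ carries an obvious commutative product, one has $\NN\cong(\NN\otimes U)^\pi$, and the target sections can be written down by hand (e.g.\ $x'_\tau=(-x+t_2z)\otimes v_1$, $w'_\tau=(-w+t_1xy-t_2\,dt_1\,u)\otimes v_1\otimes v_2$), so the ``distinguished section of $\OO(\pi^{red})$'' you allude to never needs to be named. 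Second, rather than a filtration/spectral-sequence comparison, the paper first invokes Theorem~4.1.3 of \cite{Moriya} together with the identification of the $\pi$-action on $\pi_i(X)^\vee$ read off from the face maps of $\LL$ to determine \emph{a priori} the degrees and $\pi$-weights of the generators of $\NN$; once these agree with those of $\MM$, checking that $F\colon\MM\to\Gamma(\LL\otimes U)$ is a quasi-isomorphism reduces to observing that $d\bar x$, $d\bar u$, $d\bar w$ are not boundaries in the subalgebras on lower generators. Your spectral-sequence route would also succeed, but the paper's shortcut avoids both the convergence bookkeeping and any explicit handling of $\OO(\pi^{red})$.
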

\begin{proof}
Let $F$ be a fiber of the fibration $p:X\to T^2$  naturally associated to the map $A_{T^2}\to \LL$. The  topologically defined action of $\pi$ on $\pi_i(X)^\vee$ is identified with the action on the linear dual $\pi_i(F)^\vee$ defined by sliding maps from a sphere along  a  lifted loop using the lifting property of the  fibration (see \cite{Whitehead}). In turn, this action is identified with the action induced by  the map $d_0\circ d_1^{-1}$ on  the indecomposables $H^*(\LL'_{pt}/(\LL'_{pt}\cdot \LL'_{pt}))\cong \kk\langle x, y, z, w, u\rangle$, where $\LL'_{pt}$ is the quotient of the CDGA $\LL_{pt}$ on the base point  of $T^2$ by the submodule $\kk 1$.   So we have  the isomorphisms of representations of $\pi$
\[
(\oplus_i\pi_i(X))^\vee\cong (\oplus_i\pi_i(F))^\vee \cong \kk\langle x, y, z, w, u\rangle.
\]
For example, the action on $\pi_3(X)^\vee$ is identified with the representation 
\[
g_1\mapsto \left[
\begin{array}{ccc}
a_1 & 0& 0 \\
0& b_1 & 0   \\
0&  0    & a_1
\end{array}\right], \quad g_2\mapsto \left[
\begin{array}{ccc}
a_2 & 0 & a_2 \\
0 & b_2 & 0 \\
0 & 0    & a_2
\end{array}\right]
\]
(for the coordinate ${}^t [ z\ y\ x]$)
 through this isomorphism. The semi-simplification of this representation is given by the action of the diagonal part of these matrixes. By this and similar observations for other degree, together with Theorem  4.1.3 of \cite{Moriya} and Lemma \ref{Lmodel_of_T2}, we see that the equivariant minimal model of $X$ has the same number of generators as $\MM$ and the degrees of them and the action of $\pi$ on them are also the same.  Let $L_1$ and $L_2$ be the 1-dimensional representations of $\pi$ on which  each $g_i$ acts by the multiplication of $a_i^{-1}$ and $b_i^{-1}$, respectively. 
Let $S$ be a set of pairs $(k,l)$ of non-negative integers such that the  representations $L^k_1\otimes L^l_2 (=L^{\otimes k}_1\otimes L^{\otimes l}_2)$ with $(k,l)\in S$ appear  exactly once in each isomorphism class of the representations of the form $L_1^{k'}\otimes L_2^{l'}$ with $k',l'\geq 0$. Put 
$U=\oplus_{(k,l)\in S}L_1^k\otimes L_2^l$. We define a commutative product on $U$ by the map 
\[
(L_1^k\otimes L_2^l)\otimes (L_1^{k'}\otimes L_2^{l'})\cong L_1^{k+k'}\otimes L_2^{l+l'}\cong L_1^{k''}\otimes L_2^{l''}
\] where $(k'',l'')\in S$ and the first isomorphism is the transposition and the second is given by $v_1^{k+k'}\otimes v_2^{l+l'}\mapsto v_1^{k''}\otimes v_2^{l''}$ for some fixed generators $v_i\in L_i$. Let $\NN$ be the equivariant minimal model of $X$ and $\LL_{min}$ the $A_{T^2}$-minimal model of $\LL$. We have the following quasi-isomorphisms of semi-simple $\pi$-CDGA's
\[
\begin{split}
\NN\cong (\NN\otimes U)^\pi & \stackrel{\varphi_1}{\longrightarrow} \bigoplus_{(k,l)\in S}C_{PL}^*(X,p^*(L^k_1\otimes L^l_2))
\stackrel{\varphi_2}{\longleftarrow}  \bigoplus_{(k,l)\in S}\Gamma(\mathcal{F}(X,p)\otimes  L^k_1\otimes L^l_2) \\
& \stackrel{\varphi_3}{\longleftarrow} \bigoplus_{(k,l)\in S}
\Gamma(\LL_{min}\otimes  L^k_1\otimes L^l_2)\stackrel{\varphi_4}{\longrightarrow}
 \bigoplus_{(k,l)\in S}
\Gamma(\LL\otimes  L^k_1\otimes L^l_2)\ (=:\tilde \LL).
\end{split}
\]
Here, 
\begin{itemize}
\item the leftmost isomorphism is the obvious one.
\item In the right four objects,  $L_1^k\otimes L_2^l$ is regarded as a local system on $T^2$ and $\Gamma$ denotes the  global section functor.
\item The products on these objects are natural ones. For example, the product on $C^*_{PL}(X,p^*U)$ is the tensor of the product of forms and the product on $U$.
\item  The action on each object is  given by the multiplication of $a_i^kb_i^l$ on the summand tensored with $L_1^k\otimes L_2^l$.
\item $\varphi_1$ is induced by the zigzag of  quasi-equivalences in Lemma \ref{Ldga_dgc}.
\item $\varphi_2$ is the obvious inclusion through the identification $C^*_{PL}(X,  p^*(L_1^k\otimes L_2^l))=\Gamma(A_{PL,X}\otimes p^*(L_1^k\otimes L_2^l))$, which is actually an isomorphism since $X\times_K\Delta^{|\sigma|}$ in the definition of $\mathcal{F}(X,p)$ is simply connected.
\item $\varphi_3$ and $\varphi_4$ are induced by  quasi-isomorphisms of local systems $\LL_{min}\to \mathcal{F}(X,p)$ and $\LL_{min}\to \LL$ respectively, whose existence is ensured  by Theorems 3.12 and 5.10 of \cite{GHT}.
\end{itemize}
The compatibility of  $\varphi_1$  with the product and action is ensured by the compatibility  with the tensor structures mentioned in  Lemma \ref{Ldga_dgc}.
Thus, $\NN$ and last $\pi$-CDGA, denoted by $\tilde \LL$, are quasi-isomorphic.  We shall prove that $\MM$ is quasi-isomorphic to $\tilde \LL$ as a semi-simple $\pi$-CDGA. By abuse of notations,  we denote by $y\in \Gamma(\LL\otimes L_2), z\in \Gamma( \LL\otimes L_1)$ and $u\in \Gamma(\LL\otimes L_1\otimes L_2)$  the constant sections to the representing generator tensored with the fixed generators of $L_i$.  For example, $y_\tau=y\otimes v_2$ for the 2-cell $\tau$. We define an element $x'\in \Gamma(\LL\otimes  L_1)$ of degree 3 by $x'_\tau=(-x+t_2z)\otimes v_1$. Since 
\[
\begin{split}
d_{10}(x'_\tau)& =a_2(-(x+z)+1\cdot z)\otimes a_2^{-1}v_1=-x\otimes v_1=d_{11}(x'_\tau), \text{ and}\\
d_{20}(x'_\tau)& =a_1(-x+t_2z)\otimes a_1^{-1}v_1=d_{21}(x'_\tau),
\end{split}
\]
$x'$ is well-defined as a global section. We also have $d(x'_\tau)=dt_2z\otimes v_1$. We define an element $w'\in\Gamma(\LL\otimes  L_1\otimes L_2)$ of degree 6 by $w'_\tau=(w-t_1xy+t_2dt_1u)\otimes v_1\otimes v_2$. We see that $w'$ is well-defined and $dw'_\tau=(-dt_1 x y-dt_1dt_2u+t_2dt_1zy)\otimes v_1\otimes v_2=(dt_1x'y-dt_1dt_2u)_\tau$ by a straightforward computation. We define a map $F:\MM\to \tilde \LL$ by $s_i\mapsto dt_i$ for $i=1,2$, $\bar \alpha \mapsto \alpha$ for $\alpha=y,z$ and  $u$, $\bar x\mapsto x'$, and $\bar w\mapsto w'$. Clearly, this is a map of $\pi$-CDGA. Since $d\bar x$ is not a boundary in the subalgebra generated by $s_1,s_2,\bar y,\bar z$ and we have already known the generators of the graded algebra $\NN$, we see that $H^{\leq 4}(F)$ is an isomorphism and $H^5(F)$ is a monomorphism. Similarly, as $d\bar u$ and  $d\bar w$ are not boundaries in the subalgebras of generated by generators of lower degrees,  we conclude that $F$ is a quasi-isomorphism.
\end{proof}

Let us look at how some homotopy invariants are recovered from the equivariant minimal model based on the identification in section 3 and the general theory in \cite{Moriya}.\\
\indent {\bf The action of $\pi_1(X)$ on the homotopy groups}. \ Note that the topologically defined action on $\pi_i(X)^\vee$ of $\pi=\pi_1(X)$ is {\em not} compatible with the differential of $\MM$ for any graded linear identification between the generators of $\Lambda Z$ and those of $\MM$ with degree $\geq 3$. Neverthless, we can recover the action on $\pi_i(X)^\vee$.  Let $V^i$ be the space of the generators of $\MM$ of degree $i$ and $\MM(i-1)\subset \MM$ the subalgebra generated by elements of degree $\leq i-1$. As in Theorem 1.0.2 of \cite{Moriya}, the semi-simplification of $\pi_i(X)^\vee$ is isomorphic to $V^i$, and   the extension is encoded in the differential. The differential $d:V^i\to \MM(i-1)^{i+1}\oplus \MM^1\otimes V^i$ followed by the projection to $\MM^1\otimes V^i$ is naturally regarded as an MC-element  $\eta$, and $(V^i,\eta)$ is isomorphic to the object in $\TT\MM$ corresponding to the topological action  on $\pi_i(X)^\vee$ through the equivalence of Lemma \ref{Ldga_dgc} (see sub-subsections 2.2.1 and 3.3.1 of \cite{Moriya} for the sign convention). The MC-element corresponding to the differential on $V^3$ is given by
\[
\left[
\begin{array}{ccc}
0 & 0 & s_2 \\
0 & 0 & 0 \\
0 & 0 & 0
\end{array}
\right].
\]
By Lemma \ref{Lrealization}, this corresponds to the representation
$\left(\left[
\begin{array}{ccc}
a_1 & 0 & 0 \\
0 & b_1 & 0 \\
0 & 0 & a_1
\end{array}
\right], \left[
\begin{array}{ccc}
a_2 & 0 & -a_2 \\
0 & b_2 & 0 \\
0 & 0 & a_2
\end{array}
\right]\right)$. Clearly, this is isomorphic to the action obtained from the original local system in the proof of Theorem \ref{Tmain}. \\ 
\indent {\bf The model in the original rational homotopy theory}.  \ A  CDGA quasi-isomorphic to the usual polynomial de Rham algebra $A_{PL}(X)=\Gamma(A_X)$  is given by $\MM^{\pi}$. So if for any pair of  non-negative integers $(k,l)\not=(0,0)$,   $a_i^kb_i^l\not=1$ holds for at least one of $i=1,2$, we have $A_{PL}(X)\simeq \Lambda(s_1,s_2)$. If $a_i$ is not a root of unity for at least one of $i$, and $b_i=a_i^{-1}$ for both of $i=1,2$, $A_{PL}(X)$ is quasi-isomorphic to the subalgebra of $\MM$ generated by $s_1, s_2, \bar x\bar y, \bar y\bar z, \bar u, \bar w$. \\
\indent {\bf Twisted cohomology}. \  We can obtain an algebraic complex which computes  cohomology of $X$ with coefficients in $V_3$ in Example \ref{Eextension} using the identification given there. Let $V_3^{ss}$ be the semi-simplification of $V_3$ so $g_1$ acts on $V_3^{ss}$ via scalar multiplication by $c$ and $g_2$ does trivially. The object in $\mathsf{T}\MM$ corresponding to $V_3$ is given by simply replacing $dt_1$ in $V_4$ with $s_1$. So $C^*_{PL}(X,V_3)$ is quasi-isomorphic to the complex $((\MM\otimes V_3^{ss})^{\pi}, D)$ with the differential $D$ given by
\[
D(\omega\otimes {}^t[k_1\ k_2\ k_3]
)=d\omega \otimes {}^t[k_1\ k_2\ k_3]
+s_1\omega \otimes M  \, {}^t[k_1\ k_2\ k_3]
\]
where $M=-c^{-2}\left[\begin{array}{ccc} 
0 & ce & ch-ef/2 \\
0 & 0 & cf \\
0 & 0 & 0
\end{array}\right]$. Similarly,
for $V_5$ in the same example, $C^*_{PL}(X,V_5)$ is quasi-isomorphic to the complex
$((\MM\otimes V_5^{ss})^\pi, D')$ with the differential
\[
D'(\omega\otimes {}^t[k_1\ k_2\ k_3])
=d\omega \otimes {}^t[k_1\ k_2\ k_3]
-(e_1/c_1)s_1\omega\otimes {}^t[k_2\ 0\ 0]-(e_2/c_2)s_2\omega\otimes {}^t[k_3\ 0\ 0].
\]
In principle, we can get a similar algebraic complex  for any finite dimensional representation of $\pi$ by successive applications of Lemmas \ref{Lextension_isom} and  \ref{Lrealization} if we can find a polynomial form as in Lemma \ref{Lrealization}.\\ 
\indent In the proof of Theorem \ref{Tmain}, we obtained a semi-simple $\pi$-CDGA which is quasi-isomorphic to the equivariant minimal model in terms of a local system model. A similar procedure works for a general space with the fundamental group $\pi$. Let $E$ be a connected space with $\pi_1(E)=\pi$ and $\pi_i(E)\otimes_{\ZZ} \QQ$ finite dimensional for each $i\geq 2$. We may assume the existence of a 2-connected fibration $p:E\to T^2$ by replacing $E$ with a weak homotopy equivalent space if necessary. Let $B$ be a local system model of $E$. By definition, $B$ is connected with $\mathcal{F}(E,p)$ by a zigzag of quasi-isomorphisms of $A_{T^2}$-algebras. Let $pt \in T^2$ be the 0-cell and $(\Lambda W, d)$ the minimal model of $B_{pt}$ (in the sense of Sullivan). We fix quasi-isomorphisms $\psi_{pt}: (\Lambda W,d)\to B_{pt}$ and $\psi_{\sigma_i}:(\Lambda W, d)\to B_{\sigma_i}$ of CDGA's for $i=1,2$. We can take maps of CDGA's $\bar d_{ij}: (\Lambda W,d)\to (\Lambda W, d)$ such that the square
\[
\xymatrix{
(\Lambda W, d)\ar[r]^{\bar d_{ij}}\ar[d]^{\psi_{\sigma_i}} & (\Lambda W, d)\ar[d]^{\psi_{pt}} \\
B_{\sigma_i}\ar[r]^{d_j} & B_{pt}}
\] 
commutes up to homotopy for $i=1,2,\  j=0,1$. Since $(\Lambda W,d)$ is minimal, $\bar d_{ij}$ is an isomorphism. We regard $W$ as the representation of  $\pi$ on which $g_i$ acts as the map  on the indecomposables induced  by $\bar d_{i0}\circ (\bar d_{i1})^{-1}$. Suppose $\kk$ is algebraically closed. We can take a basis $\{x_j\}_{j\in J}$ of the semi-simplification of $W$ such that $g_i$ acts by a scalar $c_{ij}\in \kk$ on $x_j$. Let $M_j$ be the one dimensional representation of $\pi$ on which $g_i$ acts by $c_{ij}^{-1}$. Let $S$ be the set of collections $(k_j)_{j\in J}$ of non-negative integers such that for (possibly) only  finitely many $j$'s, $k_j\not=0$ and  the representations $\otimes_{j\in J}M_j^{k_j}$ with $(k_j)\in S$ appear exactly once in each isomorphism class of representations of the form $\otimes_{j\in J}M_j^{k'_j}$ with $k'_j\geq 0$ for any $j\in J$ and $k'_j\not =0$ only for  finite $j$'s. Let $\tilde B$ be the semi-simple $\pi$-CDGA defined as follows. As a complex, set
\[
\tilde B=\bigoplus_{(k_j)\in S}\Gamma(B\otimes (\otimes_{j\in J}M_j^{k_j})),
\]
where $M_j$ is regarded as a local system on $T^2$. The product on $\tilde B$ is defined similarly to that of  $\tilde \LL$ in the proof of Theorem \ref{Tmain} by fixing a generator of $M_j$ and  using the product of $B$ and the tensors of $M_j$'s. The generator $g_i$ acts on $\Gamma(B\otimes (\otimes_{j\in J}M_j^{k_j}))$ by $\prod_{j\in J}c_{ij}^{k_j}$. The proof of the following theorem is completely similar to Theorem \ref{Tmain}.
\begin{thm}\label{Llocal_equiv}
Suppose  $\kk$ is algebraically closed. Let $E$ be a pointed connected space with $\pi_1(E)=\pi$ and $\pi_i(E)\otimes_{\ZZ} \QQ$ finite dimensional for each $i\geq 2$ and $B$ a local system model of $E$. With the above notations, $\tilde B$ is connected with  the equivariant minimal model of $E$ by a zigzag of quasi-isomorphisms between semi-simple $\pi$-CDGA's.  \hfill \qedsymbol
\end{thm}
Clearly, we have a similar claim for the case of $\pi_1(E)=\ZZ^n$ with $n\geq 3$.

\end{document}